\title{Einstein locally conformal   calibrated $G_2$-structures}
\author{Anna Fino and Alberto Raffero}
\thanks{Research partially supported by  the project PRIN {\em Variet\`a reali e complesse: geometria, topologia e analisi armonica},  the project FIRB {\em Differential Geometry and Geometric functions theory} and by  GNSAGA (Indam) of Italy.}
\subjclass[2010]{53C10, 53C30, 53C25}
\address{Dipartimento di Matematica, Universit\`a di Torino, via Carlo Alberto 10, 10123 Torino, Italy}
\email{annamaria.fino@unito.it}
\email{alberto.raffero@unito.it}
\theoremstyle{remark}
\newtheorem{remark}{Remark}[section]
\theoremstyle{definition}
\newtheorem{ex}[remark]{Example}
\theoremstyle{plain}
\newtheorem{teo}[remark]{Theorem}
\newtheorem{prop}[remark]{Proposition}
\newtheorem{corol}[remark]{Corollary}
\newcommand{\beq}{\begin{equation}}
\newcommand{\eeq}{\end{equation}}
\newcommand{\bqn}{\begin{eqnarray}}
\newcommand{\eqn}{\end{eqnarray}}
\newcommand{\bqne}{\begin{eqnarray*}}
\newcommand{\eqne}{\end{eqnarray*}}
\newcommand{\R}{{\mathbb R}}
\newcommand{\tz}{\tau_0}
\newcommand{\tu}{\tau_1}
\newcommand{\td}{\tau_2}
\newcommand{\ttr}{\tau_3}
\newcommand{\W}{\wedge}
\newcommand{\f}{\varphi}
\newcommand{\de}{\delta}
\newcommand{\RR}{\mathbb{R}}
\newcommand{\SU}{{\rm SU}}
\begin{document}
\maketitle

\begin{abstract} 
We study locally conformal calibrated $G_2$-structures whose underlying Riemannian metric is Einstein, showing that in the compact case the scalar curvature cannot be positive.
As a consequence, a compact homogeneous  7-manifold  cannot admit  an invariant  Einstein locally  conformal   calibrated $G_2$-structure unless the underlying metric is flat. 
In contrast to the compact case, we  provide a non-compact example of  homogeneous manifold endowed with a  locally conformal calibrated $G_2$-structure   
whose associated  Riemannian metric is Einstein and non Ricci-flat.  
The  homogeneous Einstein metric  is a rank-one  extension of a Ricci soliton on the 3-dimensional complex Heisenberg group endowed with a left-invariant coupled  
$\SU(3)$-structure $(\omega, \Psi)$, i.e.,  such that $d \omega = c {\rm Re}(\Psi)$, with $c \in \R - \{ 0 \}$. 
Nilpotent Lie algebras admitting  a  coupled  $\SU(3)$-structure are also classified.
\end{abstract}

\section{Introduction}

We recall that a seven-dimensional smooth manifold $M$ admits a $G_2$-structure if the  structure group of the frame bundle reduces 
to the exceptional Lie group $G_2$. 
The existence of a $G_2$-structure is equivalent to the existence of a non-degenerate 3-form $\varphi$ defined on the whole manifold (see for example \cite{Jo3}) and 
using this 3-form it is possible to define a Riemannian metric $g_\f$ on $M$.

If $\varphi$ is parallel with respect to the Levi-Civita connection, i.e., $\nabla^{LC} \varphi =0$, then the holonomy group is contained in $G_2$, the $G_2$-structure is called {\it parallel} 
and the corresponding manifolds are called {\em $G_2$-manifolds}. In this case, the induced metric $g_\f$ is Ricci-flat.
The first examples of complete metrics with holonomy $G_2$ were constructed by Bryant and Salamon \cite{BS}. 
Compact examples of manifolds with holonomy $G_2$ were obtained first by Joyce \cite{Jo1, Jo2, Jo3} and  then by Kovalev \cite{Kov} and by Corti, Haskins, 
Nordstr\"om, Pacini \cite{CHNP}. 
Incomplete Ricci-flat metrics of holonomy $G_2$  with a 2-step nilpotent isometry group $N$  acting on orbits of codimension 1 were obtained in \cite{CF, GLPS}. 
It turns out that these metrics are locally isometric (modulo a conformal change) to homogeneous metrics on solvable Lie groups, which are obtained as rank one extensions of  
a six-dimensional nilpotent Lie group  endowed with an invariant $\SU(3)$-structure of a special kind, known in the literature as 
{\it half-flat} \cite{CS}.

Examples of compact and non-compact  manifolds endowed with non-parallel  $G_2$-structures were given for instance in
\cite{Ca, CMS, CF, Fe1, Fe}. In particular, in \cite{CF} conformally parallel $G_2$-structures on solvmanifolds, i.e., on simply connected solvable Lie groups, were studied. 
More in general,  in \cite{IPP} it was shown that a seven-dimensional compact Riemannian manifold $M$ admits a locally conformal
parallel $G_2$-structure if and only if it has as   covering  a Riemannian cone over a compact nearly K\"ahler 6-manifold  such that the covering transformations are
homotheties preserving the corresponding parallel $G_2$-structure. 

By \cite{Br, CI, FI}, it is evident that the Riemannian scalar curvature of a $G_2$-structure may be expressed in terms of the $3$-form 
$\varphi$ and its derivatives.  More precisely, in \cite{Br} an expression of the Ricci curvature and the scalar curvature in terms of the four
intrinsic torsion forms $\tau_i, i =0, \ldots, 3$, and their exterior derivatives was given. 
Moreover,  using this it is  possible to show that  the scalar curvature has a definite sign for certain classes of
$G_2$-structures. 

If $d\f = 0$, the $G_2$-structure is called {\it calibrated} or {\it closed}. 
The geometry of this family of $G_2$-structures was studied in \cite{CI}. 
Furthermore, Bryant proved in \cite{Br} that if the scalar curvature of a closed $G_2$-structure is non-negative then the $G_2$-structure is parallel.

We say that a $G_2$-structure $\varphi$ is {\it Einstein} if the underlying Riemannian metric $g_{\varphi}$ is Einstein. 
In \cite{Br,CI} it was proved, as an analogous of Goldberg conjecture for almost-K\"ahler manifolds,  that  on a compact manifold an Einstein (or, more in general,  
with divergence-free Weyl tensor \cite{CI}) calibrated $G_2$-structure has holonomy contained in $G_2$. 
In the non-compact case, Cleyton and Ivanov  showed  that the same result is true with the additional assumption that the $G_2$-structure is $*$-Einstein, 
but it still an open problem to see if there exist (even incomplete) Einstein metrics underlying  calibrated $G_2$-structures.   
Recently, some negative results were proved in the case of  non-compact homogeneous spaces in \cite{FFM}. 
In particular, the authors showed that a seven-dimensional solvmanifold cannot admit any left-invariant calibrated $G_2$-structure inducing an Einstein metric 
$g_\f$ unless $g_\f$ is flat.

In the present paper, we are mainly interested in the geometry of {\it locally  conformal calibrated} $G_2$-structures, i.e., $G_2$-structures whose associated metric is 
conformally equivalent (at least locally) to the metric induced by a calibrated $G_2$-structure.

In Section \ref{Einsteinlocconfcal}, we  prove that a compact manifold endowed with an Einstein locally conformal calibrated $G_2$-structure has non-positive scalar curvature 
(and then has either zero or negative curvature if it is also connected) and we show that  a compact homogeneous 7-manifold cannot admit an invariant Einstein locally conformal 
calibrated $G_2$-structure unless the underlying metric is flat.   

In the last section, we give a non-compact example of a homogeneous manifold endowed with an Einstein locally conformal calibrated $G_2$-structure. 
The homogeneous manifold is a solvmanifold, thus this example and the aforementioned result of \cite{FFM} highlight a different behaviour of 
calibrated and locally conformal calibrated $G_2$-structures.   
Moreover, the homogeneous Einstein metric is a rank-one extension of a Ricci soliton on the complex Heisenberg group 
induced by a coupled $\SU(3)$-structure $(\omega, \Psi)$ such that $d \omega = -  {\rm Re} (\Psi)$. 
Recall that a half-flat $\SU(3)$-structure is said to be {\it coupled} if $d\omega$ is proportional to 
${\rm Re} (\Psi)$ at each point (see \cite{Sa}). 
Finally, we classify nilpotent Lie groups  admitting a  left-invariant coupled $\SU(3)$-structure, showing that the complex Heisenberg group  is, up to isomorphisms, 
the only nilpotent Lie group admitting  a  coupled  $\SU(3)$-structure $(\omega, \Psi)$  whose associated metric is a Ricci soliton.

\medskip
\noindent {\em{Acknowledgements}}.  The authors would like to thank Simon Salamon,  Uwe Semmelmann and Luigi Vezzoni   for  useful  comments on the paper. Aspects of
the work were described at the Workshop  \lq \lq $G_2$ days\rq \rq (King's and University College, London, 2012). 
 
\smallskip
\section{Preliminaries on $G_2$ and $\SU(3)$-structures} 

Let $\left(e_1, \ldots, e_7\right)$ be the standard basis of $\R^7$ and $\left(e^1, \ldots, e^7\right)$ be the corresponding dual basis. We set
$$
\varphi = e^{123}  + e^{145}  + e^{167}  + e^{246}  - e^{257}  - e^{347} - e^{356},
$$
where for simplicity $e^{ijk}$ stands for the wedge product $e^i \wedge e^j \wedge e^k$ in $\Lambda^3 ({(\R^7)}^*)$.
The  subgroup of ${\rm GL}(7, \R)$ fixing $\varphi$ is  $G_2$.
The basis  $(e^1,\ldots, e^7)$  is an oriented orthonormal basis for the underlying metric and the orientation is determined by the inclusion $G_2\subset {\rm SO}(7)$. 
The group $G_2$ also fixes the 4-form
$$
* \varphi = e^{4567} + e^{2367} + e^{2345} + e^{1357} - e^{1346} - e^{1256} - e^{1247},
$$
where $*$ denotes the Hodge star operator determined by the  associated metric  and orientation.

We recall that a $G_2$-structure on a $7$-manifold  $M$ is characterized by a positive 3-form $\f$. Indeed, it
turns out that there is a $1-1$ correspondence between $G_2$-structures on a 7-manifold
and  3-forms  for which the bilinear form  $B_{\f}$   defined by
$$
B_{\f} (X, Y) = \frac{1}{6}  \, i_{X} \f \wedge i_Y\f \wedge  \f 
$$
is positive definite, where  $i_{X}$  denotes  the contraction by $X$. A 3-form  $\f$  for which $B_{\f}$ is positive definite defines a unique Riemannian metric $g_{\f}$
and volume form $dV_\f$ such that for any couple of vectors $X$ and $Y$ on $M$ the following relation holds
$$
g_{\f} (X, Y) dV_\f  = \frac  {1}{6} \,  i_{X} \f \wedge i_Y\f \wedge  \f.
$$
As in \cite{CI}, we let 
$$
\varphi = \frac{1} {6} \varphi_{ijk} e^{ijk}
$$
and define the $*$-Ricci tensor of the $G_2$-structure as
$$
\rho^*_{sm} := R_{ijkl} \varphi_{ijs} \varphi_{klm}.
$$
A $G_2$-structure is said to be $*$-Einstein if the traceless part of the $*$-Ricci tensor vanishes, i.e., if 
$\rho^* = \frac{s^*}{7} g$, where $s^*$ is the trace of $\rho^*$.

On a $7$-manifold endowed with a $G_2$-structure, the action of $G_2$ on the tangent spaces induces an action of $G_2$ on the exterior algebra ${\Lambda}^p(M)$, 
for any $p \geq 2$.  In \cite{Br2}, it was shown that there are irreducible $G_2$-module decompositions
$$
\begin{array} {l}
\Lambda^2 ({(\R^7)}^*) = \Lambda^2_{7} ({(\R^7)}^*) \oplus
 \Lambda^2_{14} ({(\R^7)}^*),\\[3 pt]
\Lambda^3 ({(\R^7)}^*) = \Lambda^3_{1} ({(\R^7)}^*)\oplus \Lambda^3_7 ({(\R^7)}^*)
\oplus \Lambda^3_{27} ({(\R^7)}^*),
\end{array}
$$
where $\Lambda^p_k ({(\R^7)}^*)$ denotes an irreducible $G_2$-module of dimension $k$. 
Using the previous decomposition of $p$-forms, in
\cite{Br} a simple expression of $d\varphi$  and $d*\varphi$ was obtained, 
where $*$ denotes the Hodge operator defined  by  the metric $g_\f$ and the volume form $dV_\f$.
More precisely,  for any $G_2$-structure $\varphi$ there exist unique differential forms $\tau_0 \in \Lambda^0 (M),$ 
$\tau_1 \in \Lambda^1 (M),$   $\tau_2 \in \Lambda^2_{14} (M),$ 
$\tau_3 \in \Lambda^3_{27} (M),$ 
such that
$$
\begin{array}{rcl}
d \varphi& = & \tau_0 *  \varphi + 3 \tau_1 \wedge \varphi + * \tau_3,\\[3 pt]
d * \varphi & = & 4 \tau_1 \wedge * \varphi + \tau_2 \wedge \varphi,
\end{array}
$$
where $\Lambda^p_k (M)$ denotes the space of sections of the bundle $\Lambda^p_k (T^* M)$.

In the case of a closed $G_2$ structure we have
$$
\begin{array} {rcl}
 d \varphi & = & 0,\\
d*\varphi & = &  \tau_2 \wedge \varphi.
\end{array}
$$
By the results of \cite{Br}, the scalar curvature is given by 
$$
{\rm Scal} (g_{\varphi}) =  - \frac {1} {2} | \tau_2|^2
$$
and from this it is clear that it cannot be positive.

For a locally conformal calibrated $G_2$-structure $\f$ one has $\tz \equiv 0$ and $\ttr\equiv0$, so
$$ 
\begin{array} {rcl}
 d \varphi & = & 3 \tau_1  \wedge \varphi,\\
d*\varphi & = & 4 \tau_1 \wedge * \varphi + \tau_2 \wedge \varphi,
\end{array}
$$
and taking the exterior derivative of the former it is easy to show that $\tu$ is a closed 1-form. 
Moreover, in this case the scalar curvature has not a definite sign as one can check from its expression
$$
{\rm Scal} (g_{\varphi}) = 12 \de \tau_1 + 30 | \tau_1 |^2 - \frac {1} {2} | \tau_2 |^2,
$$
where $\de$ denotes the adjoint of the exterior derivative $d$ with respect to the metric $g_\f$.

If the only nonzero intrinsic torsion form is $\tu$, we have the so called {\it locally conformal parallel} $G_2$-structures. 
They are named in this way since a conformal change of the metric $g_\f$ associated to a $G_2$-structure of this kind gives (at least locally) the metric 
induced by a parallel $G_2$-structure. In this case
$$ 
\begin{array} {rcl}
 d \varphi & = & 3 \tau_1  \wedge \varphi,\\
d*\varphi & = & 4 \tau_1 \wedge * \varphi.
\end{array}
$$
We will give an example of such a structure at the end of  Section \ref{sectex}.

We recall that a six-dimensional smooth manifold admits an $\SU(3)$-structure if the structure group of the frame bundle can be reduced to $\SU(3)$.
It is possible to show that the existence of an $\SU(3)$-structure is equivalent to the existence of an almost  Hermitian structure  $(h, J, \omega)$ and a unit $(3,0)$-form  $\Psi$.

Since $\SU(3)$ is the stabilizer of the transitive action of $G_2$ on the 6-sphere $S^6,$ it follows that a $G_2$-structure on a 7-manifold
induces an $\SU(3)$-structure on any oriented hypersurface. 
If the $G_2$-structure is parallel, then the $\SU(3)$-structure is half-flat \cite{CS}. In terms of the  forms $(\omega,  \Psi)$ this means 
$d (\omega \wedge \omega) =0, d  ({\rm Re}(\Psi))=0$. 
 
In our computations we will use another characterization of $\SU(3)$-structures which follows from the results of \cite{Hit, Rei}. We describe it here.
Consider a six-dimensional oriented real vector space $V$, a $k$-form on $V$ is said to be {\it stable} if its GL(V)-orbit is open. 
Let $A:\Lambda^5(V^*) \rightarrow V\otimes\Lambda^6(V^*)$ denote the canonical isomorphism  given by $A(\gamma) = w \otimes \Omega$, where $i_w\Omega = \gamma$, and
define for a fixed 3-form $\sigma \in\Lambda^3(V^*)$
$$
K_\sigma : V \rightarrow V\otimes\Lambda^6(V^*),\ \  K_\sigma(w) = A((i_w \sigma)\W\sigma)
$$ 
and 
$$
\lambda : \Lambda^3(V^*) \rightarrow (\Lambda^6(V^*))^{\otimes2},\ \  \lambda(\sigma) = \frac16{\rm tr}K^2_\sigma.
$$
A 3-form $\sigma$ is stable if and only if $\lambda(\sigma)\neq0$ and whenever this happens it is possible to define a volume form by $\sqrt{|\lambda(\sigma)|} \in \Lambda^6(V^*)$,  
where the positively oriented root is chosen, and an endomorphism 
$$
J_\sigma = \frac{1}{\sqrt{|\lambda(\sigma)|}}K_\sigma,
$$ 
which is a complex structure when $\lambda(\sigma)<0$.  

A pair of stable forms $(\omega,\sigma)\in\Lambda^2(V^*)\times\Lambda^3(V^*)$ is called {\it compatible} if $\omega\W\sigma=0$ and {\it normalized} if 
$J^*_\sigma\sigma\W\sigma = \frac23\omega^3$ (the latter identity is non-zero since a 2-form $\omega$ is stable if and only if $\omega^3\neq0$). 
Such a pair defines a (pseudo) Euclidean metric $h(\cdot,\cdot) = \omega(J_\sigma\cdot,\cdot)$. 
As a consequence, on a six-dimensional smooth manifold $N$ there is a one to one correspondence between 
$\SU(3)$-structures and pairs $(\omega,\sigma)\in\Lambda^2(N)\times\Lambda^3(N)$ such that for each point $p\in N$ the pair of forms defined on $T_pN$  
$(\omega_p,\sigma_p)$ is stable, compatible, normalized, has $\lambda(\sigma_p)<0$ and induces a Riemannian metric $h_p(\cdot,\cdot) = \omega_p(J_{\sigma_p}\cdot,\cdot)$. 
In this case we have $\Psi = \sigma + {\rm i}J^*_\sigma\sigma$ and, then, $\sigma = {\rm Re}(\Psi)$. 
We refer to $h$ as the {\it associated Riemannian metric} to the $\SU(3)$-structure $(\omega,\sigma)$.

An $\SU(3)$-structure $(\omega,\sigma)$ on a $6$-manifold $N$ is called {\it coupled} if $d \omega = c \sigma$, with $c$ a non-zero real number.  
Note that in particular a coupled $\SU(3)$-structure is half-flat since $d (\omega^2) =0$ and $d \sigma =0$ and its intrinsic torsion belongs to the space 
${\mathcal W_1}^- \oplus {\mathcal W_2}^- $, where ${\mathcal W_1}^- \cong \R$ and ${\mathcal W_2}^- \cong  \frak{su} (3)$ (see \cite{CS}). 

It is interesting to notice that the product manifold $N \times \RR$, where $N$ is a $6$-manifold endowed with a coupled $\SU(3)$-structure $(\omega, \sigma)$, 
has a natural locally conformal calibrated $G_2$-structure defined by 
$$\varphi  = \omega \wedge dt + \sigma.$$ 
Indeed,
$$
d \varphi = c \sigma  \wedge dt =  c \varphi \W dt,
$$
since in local coordinates the components of $\sigma$ are functions defined on $N$ and thus they do not depend on $t$. 
Then, $\tz \equiv 0, \ttr \equiv 0$ and $\tu = \left(-\frac13c\right)dt$. 

\smallskip
\section{Einstein locally conformal calibrated $G_2$-structures on compact manifolds} \label{Einsteinlocconfcal}
We will show now that a seven-dimensional, compact, smooth manifold $M$ endowed with an Einstein  locally conformal calibrated  
$G_2$-structure $\varphi$ has ${\rm Scal} (g_{\varphi}) \leq 0$. 
It is worth observing here that, up to now, there are no known examples of smooth manifolds endowed with a locally conformal calibrated 
$G_2$-structure whose associated metric is Ricci-flat (and then has zero scalar curvature).

First of all recall that given a Riemannian manifold $(M,g)$ of dimension $n \geq 3$ it is possible to define the so called {\it conformal Yamabe constant} $Q(M,g)$ in the following way: 
set $a_n := \frac{4(n-1)}{n-2}, p_n := \frac{2n}{n-2}$ and let $C^\infty_c(M)$ denote the set of compactly supported smooth real valued functions on $M$. Then 
$$
Q(M,g) :=  \underset{u\in C^\infty_c(M), u \not\equiv 0}{\rm inf} \left\{\frac{\int_M(a_n|du|_g^2 + u^2 {\rm Scal}(g))dV_g}{(\int_M|u|^{p_n}dV_g)^{\frac2{p_n}}} \right\} .
$$ 

The sign of $Q(M,g)$ is a conformal invariant, in particular the following characterization holds:
\begin{prop}
If $(M,g)$ is a compact Riemannian manifold of dimension $n \geq 3$, then $Q(M,g)$ is negative/zero/positive if and only if $g$ is conformal to a Riemannian metric of  
negative/zero/positive scalar curvature.
\end{prop}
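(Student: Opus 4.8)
The plan is to combine the conformal transformation law for the scalar curvature with elementary linear elliptic theory, avoiding any appeal to the solution of the Yamabe problem. Write $n=\dim M$, let $\Delta_g=d^*d$ be the non-negative Laplacian on functions, and call $L_g:=a_n\Delta_g+\mathrm{Scal}(g)$ the \emph{conformal Laplacian}, so that the numerator of the Yamabe quotient $Y_g(u)$ in the definition of $Q(M,g)$ is exactly $\int_M u\,L_g u\,dV_g$. I would first record the standard facts that for a smooth conformal change $\bar g=u^{4/(n-2)}g$ with $u>0$ one has $dV_{\bar g}=u^{p_n}dV_g$ and $L_g u=\mathrm{Scal}(\bar g)\,u^{p_n-1}$, and, more generally, the conformal covariance $L_{\bar g}v=u^{1-p_n}L_g(uv)$; substituting $uv$ for $v$ then gives $Y_{\bar g}(v)=Y_g(uv)$, so, $u$ being positive, $Q(M,\bar g)=Q(M,g)$. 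Thus $Q$ itself — not merely its sign — is a conformal invariant, and it suffices to prove, for each $\varepsilon\in\{-,0,+\}$, that $Q(M,g)$ has sign $\varepsilon$ precisely when the conformal class of $g$ contains a metric whose scalar curvature has sign $\varepsilon$ at every point.

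For the implication starting from such a representative $\bar g$ (whose scalar curvature has constant sign $\varepsilon$), I would use $Q(M,g)=Q(M,\bar g)$ and test $Y_{\bar g}$ with the constant function $1$, which is admissible because $M$ is compact, so $C^\infty_c(M)=C^\infty(M)$. This gives $Y_{\bar g}(1)=\big(\int_M\mathrm{Scal}(\bar g)\,dV_{\bar g}\big)\big/(\mathrm{Vol}(M,\bar g))^{2/p_n}$, which is negative when $\varepsilon=-$ (hence $Q<0$) and zero when $\varepsilon=0$; in the latter case $Y_{\bar g}(v)\ge 0$ for every $v$, so $Q=0$. When $\varepsilon=+$, compactness gives $m:=\min_M\mathrm{Scal}(\bar g)>0$, so the numerator of $Y_{\bar g}(v)$ is at least $\min(a_n,m)\,\|v\|_{W^{1,2}(\bar g)}^2$, and the Sobolev embedding $W^{1,2}(M)\hookrightarrow L^{p_n}(M)$ on the compact manifold $(M,\bar g)$ yields $Y_{\bar g}(v)\ge\min(a_n,m)\,C_{\mathrm{Sob}}^{-2}>0$; hence $Q>0$.

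For the converse implication I would exhibit the required conformal metric once and for all by means of the ground state of $L_g$. On the compact manifold the self-adjoint elliptic operator $L_g$ has a lowest eigenvalue $\lambda_1$, which is simple and admits a nowhere-vanishing eigenfunction: if $\phi_1$ minimizes the Rayleigh quotient $\int_M v\,L_g v\,dV_g\big/\int_M v^2\,dV_g$ then so does $|\phi_1|$, which is therefore a (smooth, by elliptic regularity) non-negative eigenfunction and hence strictly positive by the Harnack inequality, equivalently the strong maximum principle. Taking $\phi_1>0$ and setting $\bar g:=\phi_1^{4/(n-2)}g$, the transformation law gives $\mathrm{Scal}(\bar g)=\phi_1^{1-p_n}L_g\phi_1=\lambda_1\,\phi_1^{-4/(n-2)}$, so $\mathrm{Scal}(\bar g)$ has the constant sign $\mathrm{sign}(\lambda_1)$ at every point. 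By the implication just proved, $Q(M,g)=Q(M,\bar g)$ then also has sign $\mathrm{sign}(\lambda_1)$; consequently, if $Q(M,g)$ is negative (resp. zero, positive), so is $\lambda_1$, and $\bar g$ is a conformal metric whose scalar curvature is negative (resp. zero, positive) everywhere.

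The step I expect to be the main obstacle is the positive case: unlike the other two signs, $Q(M,g)>0$ cannot be detected by a single test function, and proving that a conformal class with a pointwise-positive scalar-curvature representative has $Q>0$ really uses the Sobolev inequality with the critical exponent $p_n=2n/(n-2)$ on the compact manifold, together with the coercivity of the quadratic form of $L_{\bar g}$ (its equivalence with the $W^{1,2}$-norm) when $\mathrm{Scal}(\bar g)$ is bounded below by a positive constant. The only other analytic input is the positivity of the ground state $\phi_1$, which is classical; in particular nowhere is it necessary to solve the Yamabe equation, since the statement asks only for a conformal metric whose scalar curvature has a fixed \emph{sign}, not one of constant scalar curvature.
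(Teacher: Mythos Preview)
Your argument is correct and is essentially the standard proof of this trichotomy (via the first eigenfunction of the conformal Laplacian and the Sobolev inequality for the positive case). Note, however, that the paper does not actually prove this proposition: it is stated as a known characterization of the sign of the conformal Yamabe constant and then used as a black box in the proof of the subsequent theorem. So there is no ``paper's own proof'' to compare against; your write-up simply supplies the omitted (classical) argument, and it does so cleanly and without invoking the full solution of the Yamabe problem.
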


Using the conformal Yamabe constant it is possible to prove the following

\begin{teo}
Let $M$ be a seven-dimensional, compact, smooth manifold endowed with an Einstein locally conformal calibrated   
$G_2$-structure $\varphi$. Then ${\rm Scal} (g_{\varphi})\leq0$. Moreover, if $M$ is connected, ${\rm Scal}(g_\f)$ is either zero or negative.
\end{teo}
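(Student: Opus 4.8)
The plan is to argue by contradiction, the decisive point being that an Einstein metric of \emph{positive} scalar curvature has vanishing first Betti number, which promotes the only a priori local conformal primitive of $\tu$ to a global one.

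First I would reduce to the connected case and set up the contradiction. Passing to a connected component $M'$ of $M$ (still compact), Schur's lemma (dimension $7\ge 3$) shows that $\mathrm{Scal}(g_\f)$ is a constant $s$ on $M'$, so it is enough to prove $s\le 0$ for each component. Suppose instead that $s>0$ on some $M'$. Then $\mathrm{Ric}(g_\f)=\tfrac{s}{7}\,g_\f>0$ on $M'$, so Bochner's theorem gives $b_1(M')=0$ and hence $H^1_{dR}(M';\RR)=0$. For a locally conformal calibrated $G_2$-structure the torsion form $\tu$ is a closed $1$-form, as recalled above; being closed on $M'$ it is therefore exact, say $\tu=-df$ with $f\in C^\infty(M')$.

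Next I would use this global primitive to deform $\f|_{M'}$ conformally into a calibrated $G_2$-structure. Setting $\widetilde\f:=e^{3f}\f$ and using $d\f=3\,\tu\W\f$, one computes $d\widetilde\f=3\,e^{3f}(df+\tu)\W\f=0$, so $\widetilde\f$ is a calibrated $G_2$-structure on $M'$ whose associated metric is $g_{\widetilde\f}=e^{2f}g_\f$. By the scalar curvature formula for calibrated structures recalled above, $\mathrm{Scal}(g_{\widetilde\f})=-\tfrac12|\td(\widetilde\f)|^2_{g_{\widetilde\f}}\le 0$ at every point of $M'$.

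Finally I would conclude with the conformal Yamabe constant. Evaluating the Yamabe quotient of $g_{\widetilde\f}$ at the test function $u\equiv 1\in C^\infty_c(M')=C^\infty(M')$, the gradient term vanishes and the numerator is $\int_{M'}\mathrm{Scal}(g_{\widetilde\f})\,dV_{g_{\widetilde\f}}\le 0$ while the denominator is positive, so $Q(M',g_{\widetilde\f})\le 0$. Since $g_{\widetilde\f}$ is conformal to $g_\f$ and the sign of $Q$ is a conformal invariant, $Q(M',g_\f)\le 0$; but $g_\f$ has constant scalar curvature $s>0$ on $M'$, so by the Proposition $Q(M',g_\f)>0$ --- a contradiction. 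Hence $s\le 0$ on every component, that is $\mathrm{Scal}(g_\f)\le 0$ on $M$, and when $M$ is connected this constant is either zero or negative. I expect the only genuine obstacle to be the step upgrading \lq\lq locally conformal calibrated\rq\rq\ to \lq\lq globally conformal calibrated\rq\rq\ via $b_1=0$; the conformal transformation rule for $d\f$, the known sign of the scalar curvature of a calibrated structure, and the bound on $Q$ are all routine.
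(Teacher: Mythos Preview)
Your proof is correct and follows essentially the same route as the paper: assume positive scalar curvature, use Bochner (positive Ricci on a compact manifold) to force $\tau_1$ exact, conformally rescale to a closed $G_2$-structure with nonpositive scalar curvature, and derive a contradiction via the sign of the conformal Yamabe constant. The only cosmetic difference is that the paper obtains $Q(M,g_{\tilde\varphi})\le 0$ by invoking a result of Leung, whereas you get it directly (and more elementarily) by plugging the test function $u\equiv 1$ into the Yamabe quotient.
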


\proof
Suppose that ${\rm Scal}(g_\f) >0$, then the 1-form $\tu$ is exact. Indeed, since $d\tu=0$, we can consider the de Rham class $[\tu] \in H^1_{{\rm dR}} (M)$ and take 
the harmonic 1-form $\xi$ representing $[\tu]$, that is, $\tu = \xi + df$, where $\Delta\xi = 0$ and $f\in C^\infty(M)$.
$\xi$ has to vanish everywhere on $M$ since it is compact, oriented and has positive Ricci curvature. Then $\tu =df$.
Let us consider $\tilde\f := e^{-3f}\f$, it is clear that $\tilde\f$ is a $G_2$-structure defined on $M$. Moreover
$$
\begin{array}{lcl}
d\tilde\f & = & d(e^{-3f}\f) \\
              & = & -3e^{-3f}df\W\f + e^{-3f}d\f \\
              & = & -3e^{-3f}\tu\W\f + e^{-3f}(3\tu\W\f) \\
              & = & 0,
\end{array}
$$
so $\tilde\f$ is a closed $G_2$-structure and ${\rm Scal}(g_{\tilde\f}) \leq 0$ by \cite{Br}. 
We have $g_{\tilde\f} = e^{-2f}g_\f$, that is, $g_{\tilde\f}$ is conformal to the Riemannian metric $g_\f$ of positive scalar curvature, then the conformal Yamabe constant 
$Q(M, g_{\tilde\f})$ is positive by the previous characterization.

Since $M$ is compact, it has finite volume and is complete as a consequence of the well known Hopf-Rinow Theorem. Then, by \cite[Corollary 2.2]{Leu} we have that 
$Q(M, g_{\tilde\f}) \leq 0$, which is in contrast with the previous result.
\endproof

As a consequence of the previous proposition we have the
\begin{corol}
A seven-dimensional, compact, homogeneous, smooth manifold $M$ cannot admit an invariant locally conformal calibrated Einstein $G_2$-structure 
$\varphi$, unless the underlying metric $g_{\varphi}$  is flat.
\end{corol}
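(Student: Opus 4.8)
The plan is to combine the Theorem just proved with the homogeneous structure theory for Einstein metrics. Let $M$ be a compact homogeneous $7$-manifold carrying an invariant Einstein locally conformal calibrated $G_2$-structure $\f$. By the Theorem, ${\rm Scal}(g_\f)\leq 0$, and since $M$ is connected (homogeneity implies this, or we restrict to a connected component) either ${\rm Scal}(g_\f)<0$ or ${\rm Scal}(g_\f)=0$. The goal is to rule out the strictly negative case and then to identify the zero-scalar-curvature Einstein case with flatness.

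First I would dispose of ${\rm Scal}(g_\f)<0$. A compact homogeneous Riemannian manifold admits a transitive action of a compact Lie group, hence $(M,g_\f)$ carries an invariant metric with respect to a compact isometry group; by Bochner-type arguments (or the Bonnet--Myers--type structure theory for compact homogeneous spaces), an Einstein homogeneous metric on a compact manifold has ${\rm Scal}\geq 0$ — indeed a compact homogeneous space with negative Ricci curvature would force the isometry group to be non-compact, contradicting compactness of $M$ (Bochner's theorem: a compact homogeneous space with negative-definite Ricci has no Killing fields, impossible since the isometry group acts transitively). Thus ${\rm Scal}(g_\f)<0$ cannot occur, leaving ${\rm Scal}(g_\f)=0$.

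Next, with ${\rm Scal}(g_\f)=0$ and $g_\f$ Einstein, we get ${\rm Ric}(g_\f)=\tfrac{1}{7}{\rm Scal}(g_\f)\,g_\f=0$, so $g_\f$ is Ricci-flat. A classical result of Alekseevskii--Kimelfeld states that a Ricci-flat homogeneous Riemannian manifold is flat; in the compact setting this is even more elementary (a compact Ricci-flat manifold that is homogeneous has, after passing to a finite cover, a flat torus factor and trivial holonomy by Cheeger--Gromoll, and homogeneity forces the metric to be genuinely flat). Hence $g_\f$ is flat, which is the assertion of the Corollary.

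The main obstacle is making precise and citable the step that a compact homogeneous Einstein manifold cannot have negative scalar curvature, and the step that compact homogeneous Ricci-flat implies flat; both are standard but one should cite Bochner's vanishing theorem together with the Alekseevskii--Kimelfeld theorem (a homogeneous Ricci-flat Riemannian manifold is flat) to make the argument airtight. Everything else is a direct invocation of the Theorem and the observation that an Einstein metric with vanishing scalar curvature is Ricci-flat.
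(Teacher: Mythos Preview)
Your proposal is correct and follows essentially the same route as the paper: combine the Theorem (${\rm Scal}(g_\f)\le 0$) with the fact that compact homogeneous Einstein manifolds cannot have negative scalar curvature to force ${\rm Scal}(g_\f)=0$, hence Ricci-flatness, and then invoke Alekseevskii--Kimelfeld to conclude flatness. The only cosmetic difference is that the paper cites \cite{Be} for the nonexistence of compact homogeneous Einstein metrics with negative scalar curvature, whereas you supply the Bochner argument directly; the logical structure is identical.
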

\proof
Recall that a homogeneous Einstein manifold with negative scalar curvature is not compact \cite{Be}. Thus, every seven-dimensional, compact, 
homogeneous, smooth manifold $M$  with an invariant  $G_2$-structure $\f$ whose associated metric is Einstein has ${\rm Scal}(g_{\varphi}) \geq 0$. 
Combining this result with the previous proposition we have ${\rm Scal} (g_{\varphi}) = 0$ and, in particular, $g_\f$ is Ricci-flat.  
The statement then follows recalling that in the homogeneous case Ricci flatness implies flatness \cite{AK}.
\endproof

\smallskip
\section {Noncompact homogeneous examples and coupled  $\SU(3)$-structures}\label{sectex}
In this section, after recalling some facts about noncompact homogeneous Einstein manifolds, we first study the classification of coupled $\SU(3)$-structures on nilmanifolds 
and then we construct an example of a locally conformal calibrated  $G_2$-structure $\varphi$  inducing  an Einstein (non Ricci-flat)  metric on a noncompact homogeneous manifold.

All the known examples of noncompact homogeneous Einstein manifolds  are solvmanifolds, i.e., simply connected solvable Lie groups $S$ endowed with a left-invariant metric 
(see for instance the recent survey \cite{Lauret1}). D. Alekseevskii conjectured that  these  might exhaust the class of non-compact homogeneous Einstein manifolds  
(see \cite[7.57]{Be}).

Lauret in \cite{Lauret2} showed that  every  Einstein solvmanifold is \emph{standard}, i.e., it is 
a  solvable Lie group $S$ endowed with  a left-invariant metric such that  the orthogonal complement 
${\frak a}=[{\frak s},{\frak s}]^{{\perp}}$, where ${\frak s}$ is the Lie algebra of $S$, is abelian.
We recall that given a metric nilpotent Lie algebra ${\frak n}$ with an inner product $\langle \cdot , \cdot \rangle_\frak{n}$, a  metric solvable Lie
algebra $({\frak s} = {\frak n} \oplus {\frak a}, \langle \cdot , \cdot \rangle_\frak{s})$ is called a {\em metric solvable extension} of  $({\frak n}, \langle \cdot , \cdot \rangle_\frak{n})$
if $[\frak{s},\frak{s}] = \frak{n}$ and the restrictions to ${\frak n}$ of the Lie bracket of ${\frak s}$ and of the inner product $\langle \cdot , \cdot \rangle_\frak{s}$ coincide with
the Lie bracket of ${\frak n}$ and with $\langle \cdot , \cdot \rangle_\frak{n}$, respectively.
The dimension of $\mathfrak a$ is called the {\it algebraic rank} of $\mathfrak s$.

In \cite[4.18]{He}, it was  proved that the study of standard Einstein  metric solvable Lie algebras  reduces to the  rank-one
metric solvable extension of a nilpotent Lie algebra (i.e., those for which $\dim ({\frak a}) = 1$). 
Indeed, by \cite{He} the metric Lie algebra of any $(n + 1)$-dimensional rank-one solvmanifold can be modelled  on 
$({\frak s} = \frak{n}\oplus \RR H, \langle \cdot , \cdot \rangle_\frak{s})$ 
for some nilpotent Lie algebra  $\frak n$, with the inner product $\langle \cdot , \cdot \rangle_\frak{s}$ such that $ \langle H, {\frak n} \rangle_\frak{s} = 0$, 
$\langle H,H\rangle_\frak{s} = 1$ and the Lie bracket on $\frak s$ given by 
$$
[H, X]_\frak{s} = D X, \quad [X, Y]_\frak{s}= [X, Y]_{\frak n},
$$
where $[\cdot , \cdot ]_{\frak n}$ denotes the Lie bracket on ${\frak n}$ and $D$ is some derivation of $\frak n$.  By \cite{Lauret3}, a left-invariant metric $h$ on a nilpotent Lie group $N$ 
is a Ricci soliton if and only if the Ricci operator satisfies ${\rm Ric}(h) = \mu I + D$, for some $\mu \in \R$ and some derivation $D$ of $\frak n$, 
when $h$ is identified with an inner product on $\frak{n}$ 
or, equivalently, if and only if $(N, h)$ admits a metric standard extension whose corresponding standard solvmanifold is Einstein.  The inner product $h$ is also called {\it nilsoliton}.

In \cite{Wi}, all the seven-dimensional rank-one Einstein solvmanifolds were determined, 
proving that each one of the 34 nilpotent Lie algebras $\frak n$ of dimension 6 admits a rank-one solvable extension which can be endowed with an Einstein  inner product.

Six-dimensional nilpotent Lie algebras admitting  a half-flat $\SU(3)$-structure were classified in \cite{Conti}.
For coupled  $\SU(3)$-structures  we can  show the following 

\begin{teo} Let $\frak n$ be a six-dimensional, non-abelian, nilpotent Lie algebra admitting a coupled $\SU(3)$-structure. Then $\frak n$ 
is isomorphic to one of the following
$$
\frak n_9 = (0,0,0,e^{12},e^{14}-e^{23},e^{15}+e^{34}),   \quad  \frak n_{28}=(0,0,0,0,e^{13}-e^{24},e^{14}+e^{23}),
$$
where for instance $ \frak n_9 = (0,0,0,e^{12},e^{14}-e^{23},e^{15}+e^{34})$ means that there exists a basis 
$(e^1, \ldots, e^6)$ of $\frak n_9^*$ such that 
$$
d e^j =0, j = 1,2,3,   \quad d e^4 = e^{12},  \quad d e^5 = e^{14}-e^{23},  \quad d e^6  = e^{15}+e^{34}.
$$
Moreover, the only nilpotent Lie algebra admitting  a  coupled  $\SU(3)$-structure inducing a nilsoliton is $\frak n_{28}$.
\end{teo}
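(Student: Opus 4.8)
The plan is to start from the classification of half-flat $SU(3)$-structures on $6$-dimensional nilpotent Lie algebras obtained in \cite{Conti}. Since a coupled structure $(\omega,\sigma)$, with $\sigma = {\rm Re}(\Psi)$ and $d\omega = c\,\sigma$, $c\neq 0$, is in particular half-flat, any $\mathfrak{n}$ as in the statement must appear in that list, and I would use the explicit half-flat data (and the automorphism groups) recorded there to organize the case analysis. Being coupled imposes, on top of half-flatness, the following constraints: the $3$-form $\sigma = c^{-1}d\omega$ is \emph{exact}; it must be stable with $\lambda(\sigma)<0$, so that $J_\sigma$ is an almost complex structure; the compatibility $\omega\W\sigma = 0$ (equivalently $d(\omega^2)=0$) and the normalization $J_\sigma^*\sigma\W\sigma = \frac23\omega^3$ must hold; and $h(\cdot,\cdot) = \omega(J_\sigma\cdot,\cdot)$ must be positive definite.

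For each candidate $\mathfrak{n}$ I would take a generic invariant $2$-form $\omega = \sum_{i<j} a_{ij}\,e^{ij}$ in the fixed coframe adapted to the structure equations, compute $\sigma = c^{-1}d\omega$, and evaluate $\lambda(\sigma) = \frac16\,{\rm tr}(K_\sigma^2)$ as an explicit polynomial in the $a_{ij}$. Imposing $\lambda(\sigma)<0$ together with $\omega\W\sigma = 0$ and the normalization, and reducing the remaining freedom by ${\rm Aut}(\mathfrak{n})$ and by rescaling, I expect to find that for every $\mathfrak{n}$ different from $\mathfrak{n}_9$ and $\mathfrak{n}_{28}$ the system has no solution: for most of the algebras the space of exact $3$-forms ${\rm im}(d|_{\Lambda^2\mathfrak{n}^*})$ is too degenerate to contain any stable $3$-form of negative type, and for the few remaining ones the $2$-forms $\omega$ with $\lambda(\sigma)<0$ violate the normalization or the positivity condition.

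For $\mathfrak{n}_9$ and $\mathfrak{n}_{28}$ I would instead write down explicit coupled structures and check the four conditions above directly; on $\mathfrak{n}_{28}$, which is the complex Heisenberg Lie algebra $\mathfrak{h}_3(\C)$, this produces a pair with $d\omega = -{\rm Re}(\Psi)$. This proves the first assertion. For the last assertion I would use Lauret's criterion \cite{Lauret3}: the associated metric $h$ is a nilsoliton if and only if ${\rm Ric}(h) = cI + D$ for some $c\in\R$ and some derivation $D$ of $\mathfrak{n}$. On $\mathfrak{n}_{28}$, the inner product making the explicit coupled coframe orthonormal is, up to homothety, the nilsoliton metric of $\mathfrak{h}_3(\C)$ listed in \cite{Wi}, so $\mathfrak{n}_{28}$ carries a coupled structure whose associated metric is a nilsoliton. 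For $\mathfrak{n}_9$, I would compute, for a general coupled structure (parametrized as in the previous step, modulo ${\rm Aut}(\mathfrak{n}_9)$ and scaling), the Ricci operator of $h$ and verify that ${\rm Ric}(h) - cI$ is never a derivation of $\mathfrak{n}_9$ for any $c$; equivalently, that the nilsoliton inner product on $\mathfrak{n}_9$ (unique up to scaling and automorphism) is not the associated metric of any coupled structure.

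The bulkiest part is the case-by-case elimination in the second step, but this is a finite symbolic computation once organized by the list of \cite{Conti}. The genuinely delicate point is the negative part of the nilsoliton statement for $\mathfrak{n}_9$: it requires a complete description, up to automorphism, of \emph{all} coupled structures on $\mathfrak{n}_9$, so that the nilsoliton metric can be excluded for the whole family at once rather than for a single representative.
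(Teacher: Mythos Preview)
Your proposal is correct and follows essentially the same route as the paper: reduce to Conti's half-flat list, compute $\lambda(\sigma)$ for a generic $\omega$ on each candidate (which eliminates 21 of the 24 by sign alone), rule out $\mathfrak{n}_4$ via the positivity obstruction on $h$, and exhibit explicit coupled structures on $\mathfrak{n}_9$ and $\mathfrak{n}_{28}$. For the nilsoliton claim on $\mathfrak{n}_9$ the paper uses precisely your ``equivalent'' formulation---fix the nilsoliton orthonormal basis (unique up to scaling and automorphism) and check that no coupled $(\omega,\sigma)$ makes $h$ a multiple of the identity there---which is computationally lighter than verifying ${\rm Ric}(h)-cI\notin{\rm Der}(\mathfrak{n}_9)$ across the whole family.
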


\begin{proof}  
By the results in \cite{Conti}, the generic nilpotent Lie algebra $\mathfrak{n}$ admitting a half-flat $\SU(3)$-structure is isomorphic to one of the 24 Lie algebras described  
in Table \ref{tab:tabella}. Consider on $\mathfrak{n}$ a generic 2-form 
\begin{eqnarray}
\omega & = & b_1e^{12} + b_2e^{13}+b_3e^{14}+b_4e^{15} + b_5e^{16} + b_6e^{23} + b_7 e^{24} +b_8e^{25}\nonumber\\
               &  &  +b_9e^{26} + b_{10}e^{34} + b_{11}e^{35} + b_{12}e^{36} +b_{13}e^{45} + b_{14} e^{46} + b_{15}e^{56},\nonumber
\end{eqnarray}
where $b_i\in\RR, i=1,\ldots,15$,   and the  3-form 
$$\sigma = c(d\omega),  \quad c\in\RR-\{0\}.$$
The expression of  $\lambda(\sigma)$ for each nilpotent Lie algebra considered is given in Table \ref{tab:tabella}.

We observe that among the 24 nilpotent Lie algebras admitting a half-flat $\SU(3)$-structure we have:
\begin{itemize}
\item 1 case $(\mathfrak{n}_{28})$ for which $\lambda(\sigma)<0$ if $b_{15}\neq 0$,
\item 2 cases $(\mathfrak{n}_4$ and $\mathfrak{n}_9)$ for which the sign of $\lambda(\sigma)$ depends on $\omega$,
\item 21 cases for which $\lambda(\sigma)$ cannot be negative.
\end{itemize}

Therefore,  the 21 algebras having $\lambda(\sigma) \geq 0$ do not admit any  coupled $\SU(3)$-structure. 

Consider $\frak{n}_4$, it has structure equations
$$(0,0,e^{12},e^{13}, e^{14}+e^{23}, e^{24}+e^{15}).$$
First of all, observe that if $b_{15}=0$ then $\lambda(\sigma)=0$. So if we want to find an $\SU(3)$-structure we have to look for $\omega$ with $b_{15}\neq 0$. 
Moreover, $\sigma$ induces an almost complex structure if and only if $\lambda(\sigma)$ is negative, 
then we have to suppose in addition that $b_{15}(b_{12}+b_{13})>b^2_{14}$.
Since we want $\omega$ to be the 2-form associated to an $\SU(3)$-structure, it must be a form of type $(1,1)$ and this happens if and only if 
$\omega(\cdot,\cdot)=\omega(J\cdot,J\cdot)$, where $J = J_{\sigma}$.
Computing the previous identity with respect to the considered frame, we have that the following equations have to be satisfied by the 
components of $\omega$:
$$
\omega_{ab} =\sum_{k,m=1}^6 J^k_aJ^m_b\omega_{km},\quad  1\leq a < b \leq6
$$
(observe that $\omega_{12}=b_1$, $\omega_{13}=b_2$ and so on). 
Using these equations it is possible to write four of the $b_i$ in terms of the remaining and obtain a new expression for $\omega$.
We can now compute the matrix associated to $h(\cdot,\cdot) = \omega(J\cdot,\cdot)$ with respect to the basis $(e_1,\ldots,e_6)$ and observe that for the nonzero vector 
$v = e_4 -\frac{b_{14}}{b_{15}}e_5 +\frac{b_{13}}{b_{15}}e_6$ we have $h(v,v)=0$. 
Therefore, $h$ cannot be positive definite and, as a consequence, it is not possible to find a coupled $\SU(3)$-structure on $\frak{n}_4$.

For the Lie algebras  $\frak n_9$ and $\frak n_{28}$ we can give an explicit example of coupled $\SU(3)$-structure.
Consider on $\frak n_9$ the forms 
\begin{eqnarray}
\omega & = & -\frac32e^{12}-\frac14e^{14}-e^{15}-e^{24}+\frac12e^{26}-\frac12e^{35}-e^{36}+e^{56}, \nonumber \\
\sigma   & = & \frac{\sqrt{15}\sqrt[4]{2}}{4}e^{123}  +\frac{\sqrt {15}\sqrt [4]{2}}{8}e^{234}  -\frac{\sqrt {15}\sqrt [4]{2}}{8}e^{125} +\frac{\sqrt {15}\sqrt [4]{2}}{8}e^{134}\nonumber\\
               & &   +\frac{\sqrt {15}\sqrt [4]{2}}{4}e^{135}  -\frac{\sqrt {15}\sqrt [4]{2}}{4}e^{146}  +\frac{\sqrt {15}\sqrt [4]{2}}{4}e^{236}  +\frac{\sqrt {15}\sqrt [4]{2}}{4}e^{345}. \nonumber
\end{eqnarray}
We have  
$$\omega\W\sigma=0, \quad \omega^3\neq 0, \quad \lambda(\sigma) =-\frac{225}{64}, \quad d\omega = -\frac4{\sqrt{15}\sqrt[4]{2}}\sigma,$$
in particular $(\omega,\sigma)$ is a compatible pair of stable forms.
The associated almost complex structure  $J = J_{\sigma}$  has  the following  matrix expression  with respect to the basis $(e_1,\ldots,e_6)$:  
$$J=
 \left[ \begin {array}{cccccc} 0&0&-\sqrt {2}&0&0&0
\\ \noalign{\medskip}\sqrt {2}&0&0&-\sqrt {2}&0&0\\ \noalign{\medskip}
\frac{\sqrt {2}}{2}&0&0&0&0&0\\ \noalign{\medskip}0&\frac{\sqrt {2}}{2}&-\sqrt {
2}&0&0&0\\ \noalign{\medskip}\sqrt {2}&0&\frac{\sqrt {2}}{2}&\frac{\sqrt {2}}{2}
&0&\sqrt {2}\\ \noalign{\medskip}-\frac{\sqrt {2}}{4}&-\frac{\sqrt {2}}{4}&\frac{
3\sqrt {2}}{2}&0&-\frac{\sqrt {2}}{2}&0\end {array} \right] 
$$
and  it is easy to check that $J^*\sigma\W\sigma = \frac23\omega^3$, i.e., the pair $(\omega,\sigma)$ is normalized.

The inner product  $h(\cdot,\cdot) = \omega(J\cdot,\cdot)$   is given with respect to the basis $(e_1,\ldots,e_6)$ by 
$$h=
 \left[ \begin {array}{cccccc} \frac{5\sqrt {2}}{2}&\frac{\sqrt {2}}{8}&
\frac{\sqrt {2}}{4}&-\sqrt {2}&0&\sqrt {2}\\ \noalign{\medskip}\frac{\sqrt {2}}{8}&\frac{5\sqrt {2}}{8}&-\frac{\sqrt {2}}{4}&0&\frac{\sqrt {2}}{4}&0\\ \noalign{\medskip}
\frac{\sqrt {2}}{4}&-\frac{\sqrt {2}}{4}&\frac{7\sqrt {2}}{4}&\frac{\sqrt {2}}{4}&-\frac{
\sqrt {2}}{2}&\frac{\sqrt {2}}{2}\\ \noalign{\medskip}-\sqrt {2}&0&\frac{\sqrt {
2}}{4}&\sqrt {2}&0&0\\ \noalign{\medskip}0&\frac{\sqrt {2}}{4}&-\frac{\sqrt {2}}{2}&0
&\frac{\sqrt {2}}{2}&0\\ \noalign{\medskip}\sqrt {2}&0&\frac{\sqrt {2}}{2}&0&0&
\sqrt {2}\end {array} \right]. 
$$
and it is positive definite.  Therefore, we can conclude that $(\omega,\sigma)$ is a coupled $\SU(3)$-structure on $\frak{n}_9$.  

For $\frak{n}_{28}$  consider the pair of compatible, normalized, stable forms
\begin{equation} \label{copledn28}
\left(\omega = e^{12} + e^{34} - e^{56}, \quad  \sigma = e^{136}-e^{145}-e^{235}-e^{246}\right).
\end{equation}
This pair defines a coupled $\SU(3)$-structure with  $d \omega = -\sigma$. 
Moreover, the associated inner product 
$$
h = (e^1)^2 + \ldots + (e^6)^2
$$
is a  nilsoliton with
$${\rm Ric}(h) = -3I + 2 \, {\mbox{diag}} (1,1,1,1,2,2).$$
Summarizing our results, we can conclude that  $\frak n_9$ and $\frak n_{28}$  are, up to isomorphisms,  the only six-dimensional nilpotent Lie 
algebras admitting a coupled $\SU(3)$-structure.   

\begin{table} [ht]
\caption{Expression of $\lambda(\sigma)$ for the  six-dimensional nilpotent Lie algebras admitting a half-flat $\SU(3)$-structure.}
\label{tab:tabella}
\centering
\renewcommand\arraystretch{1.18}
\begin{tabular}{|c|c|c|c|}
\hline
$\mathfrak{n}_\cdot$& $(de^1,de^2,de^3,de^4,de^5,de^6)$ &$\lambda(\sigma)$&${\rm Sign\ of\ }  \lambda(\sigma)$\\ \hline
$\mathfrak{n}_4$       &$(0,0,e^{12},e^{13}, e^{14}+e^{23}, e^{24}+e^{15})$& $4c^4b_{15}^2(-b_{15}(b_{12}+b_{13})+b_{14}^2)$ & $?$   \\ \hline 
$\mathfrak{n}_6$       &$(0,0,e^{12},e^{13},e^{23},e^{14})$&$c^4b_{15}^4$ 						                & $\geq 0$ \\ \hline 
$\mathfrak{n}_7$       &$(0,0,e^{12},e^{13},e^{23},e^{14}-e^{25})$&$c^4(b_{14}^2-b_{15}^2)^2$   				       & $\geq 0$    \\ \hline 
$\mathfrak{n}_8$       &$(0,0,e^{12},e^{13},e^{23},e^{14}+e^{25})$&$c^4(b_{14}^2-b_{15}^2)^2$                                          & $\geq 0$   \\ \hline 
$\mathfrak{n}_9$       &$(0,0,0,e^{12},e^{14}-e^{23},e^{15}+e^{34})$&$4c^4b_{15}^2(-b_{15}(b_{9}+b_{13})+b_{14}^2)$    & $?$  \\ \hline 
$\mathfrak{n}_{10}$  &$(0,0,0,e^{12},e^{14},e^{15}+e^{23})$&$c^4b_{15}^4$  							       & $\geq 0$  \\ \hline 
$\mathfrak{n}_{11}$  &$(0,0,0,e^{12},e^{14},e^{15}+e^{23}+e^{24})$&$c^4b_{15}^4$ 							       & $\geq 0$  \\ \hline 
$\mathfrak{n}_{12}$  &$(0,0,0,e^{12},e^{14},e^{15}+e^{24})$&$0$  								       & $0$  \\ \hline 
$\mathfrak{n}_{13}$  &$(0,0,0,e^{12},e^{14},e^{15})$&$0$  								       & $0$  \\ \hline 
$\mathfrak{n}_{14}$  &$(0,0,0,e^{12},e^{13},e^{14}+e^{35})$&$c^4b_{14}^4$ 							       & $\geq 0$  \\ \hline 
$\mathfrak{n}_{15}$  &$(0,0,0,e^{12},e^{23},e^{14}+e^{35})$&$c^4(b_{14}^2-b_{15}^2)^2$  				       & $\geq 0$  \\ \hline 
$\mathfrak{n}_{16}$  &$(0,0,0,e^{12},e^{23},e^{14}-e^{35})$&$c^4(b_{14}^2+b_{15}^2)^2$  				       & $\geq 0$  \\ \hline 
$\mathfrak{n}_{21}$  &$(0,0,0,e^{12},e^{13},e^{14}+e^{23})$&$0$  								       & $0$  \\ \hline 
$\mathfrak{n}_{22}$  &$(0,0,0,e^{12},e^{13},e^{24})$&$c^4b_{15}^4$  							       & $\geq 0$  \\ \hline 
$\mathfrak{n}_{24}$  &$(0,0,0,e^{12},e^{13},e^{23})$&$0$  								       & $0$  \\ \hline 
$\mathfrak{n}_{25}$  &$(0,0,0,0,e^{12},e^{15}+e^{34})$&$c^4b_{15}^4$  						       & $\geq 0$  \\ \hline 
$\mathfrak{n}_{27}$  &$(0,0,0,0,e^{12},e^{14}+e^{25})$&$0$  								       & $0$  \\ \hline 
$\mathfrak{n}_{28}$  &$(0,0,0,0,e^{13}-e^{24},e^{14}+e^{23})$&$-4c^4b_{15}^4$  						       & $\leq 0$  \\ \hline 
$\mathfrak{n}_{29}$  &$(0,0,0,0,e^{12},e^{14}+e^{23})$&$0$  								       & $0$  \\ \hline 
$\mathfrak{n}_{30}$  &$(0,0,0,0,e^{12},e^{34})$&		$c^4b_{15}^4$  							       & $\geq 0$  \\ \hline 
$\mathfrak{n}_{31}$  &$(0,0,0,0,e^{12},e^{13})$&		$0$  								       & $0$  \\ \hline 
$\mathfrak{n}_{32}$  &$(0,0,0,0,0,e^{12}+e^{34})$&		$0$ 								       & $0$  \\ \hline 
$\mathfrak{n}_{33}$  &$(0,0,0,0,0,e^{12})$&			$0$  								       & $0$  \\ \hline 
$\mathfrak{n}_{34}$  &$(0,0,0,0,0,0)$&				$0$  								       & $0$  \\ \hline 
\end{tabular} 
\end{table}
\renewcommand\arraystretch{1}

We have just provided  a coupled $\SU(3)$-structure on $\frak{n}_{28}$ whose associated inner product  is a nilsoliton, we claim that this is the unique case among all 
six-dimensional nilpotent Lie algebras. 
It is clear that to prove the previous assertion it suffices to show that $\frak{n}_9$ does not admit any coupled $\SU(3)$-structure inducing a nilsoliton inner product. 
In order to do this, we  consider an orthonormal basis $(e_1,\ldots,e_6)$ of $\frak{n}_9$ whose dual basis satisfies the structure equations
$$\left(0,0,0,\frac{\sqrt{5}}{2}e^{12},e^{14}-e^{23},\frac{\sqrt{5}}{2}e^{15}+e^{34}\right)$$
(by the results of \cite{Lauret3} and \cite{Wi}, these are, up to isomorphisms, the structure equations for which the considered inner product  on $\frak{n}_9$ is a nilsoliton). 
As we did before, consider a generic 2-form $\omega$, the 3-form $\sigma = c(d\omega)$, evaluate $\lambda(\sigma)$ and impose that it is negative. 
Then compute $J_\sigma$ and the matrix associated to $h(\cdot,\cdot) = \omega(J_\sigma\cdot,\cdot)$ with respect to the considered basis. 
Since $h$ has to be the restriction to $\frak{n}_9$ of an Einstein inner product defined on $\frak{n}_9 \oplus \RR e_7$ and since the latter is unique up to scaling, 
we have to impose that the symmetric matrix associated to $h$ is a multiple of the identity. Solving the associated equations we find that $\lambda(\sigma)$ has to be zero, 
which is a contradiction. 
\end{proof}

Starting from a six-dimensional nilpotent Lie algebra $\frak{n}$ endowed with a coupled $\SU(3)$-structure, 
it is possible to construct a locally conformal calibrated $G_2$-structure on the rank-one solvable extension  $\frak{s} = \frak{n} \oplus \RR e_7$ under some extra hypothesis. 
Let $\hat{d}$ denote the exterior derivative on $\frak{n}$ and $d$ denote the exterior derivative on $\frak{s}$. 
Observe that given a $k$-form $\theta \in \Lambda^k(\frak{n}^*)$ we have 
$$
d\theta = \hat{d}\theta + \rho\W e^7
$$
for some $\rho \in \Lambda^k(\frak{n}^*)$.

\begin{prop}\label{eqn:propext}
Let $\frak n$ be a six-dimensional, nilpotent Lie algebra endowed with a coupled $\SU(3)$-structure $(\omega,\sigma)$ 
with $\hat{d} \omega = c \sigma$, $c \in \R - \{ 0 \}$. 
Consider on its rank one solvable extension $\frak{s} = \frak{n} \oplus \RR e_7$  the $G_2$-structure  defined by $\f = \omega \W e^7 + \sigma$, 
where the closed 1-form $e^7$ is the dual of $e_7$. Then the $G_2$-structure is locally conformal calibrated with $\tu=\frac13ce^7$ if and only if $d\sigma = -2c\sigma \W e^7$. 
\end{prop}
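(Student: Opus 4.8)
The plan is to compute $d\varphi$ and $d*\varphi$ directly using the splitting $d\theta = \hat d\theta + \rho\wedge e^7$ and to read off the torsion forms from Bryant's identities. First I would record that on $\frak s = \frak n \oplus \RR e_7$ the $3$-form is $\varphi = \omega\wedge e^7 + \sigma$, and since $e^7$ is closed (the extension is rank one, $e_7$ dual to the abelian complement) we have
$$
d\varphi = \hat d\omega \wedge e^7 + \hat d\sigma + \rho_\sigma\wedge e^7 = c\,\sigma\wedge e^7 + \rho_\sigma\wedge e^7,
$$
where I have used $\hat d\omega = c\sigma$ and $\hat d\sigma = 0$ (a coupled $SU(3)$-structure is half-flat, so $\sigma = {\rm Re}(\Psi)$ is $\hat d$-closed), and where $d\sigma = \hat d\sigma + \rho_\sigma\wedge e^7 = \rho_\sigma\wedge e^7$. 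So $d\varphi = (c\sigma + \rho_\sigma)\wedge e^7$.

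Next I would impose that $\varphi$ be locally conformal calibrated with $\tau_1 = \frac13 c\,e^7$. The defining relation for a locally conformal calibrated structure from the Preliminaries is $d\varphi = 3\tau_1\wedge\varphi$, i.e.
$$
(c\sigma + \rho_\sigma)\wedge e^7 = 3\tau_1\wedge(\omega\wedge e^7 + \sigma) = 3\tau_1\wedge\omega\wedge e^7 + 3\tau_1\wedge\sigma.
$$
Plugging in $\tau_1 = \frac13 c\, e^7$ kills the term $3\tau_1\wedge\omega\wedge e^7$ (it contains $e^7\wedge e^7 = 0$) and gives $3\tau_1\wedge\sigma = c\,e^7\wedge\sigma = -c\,\sigma\wedge e^7$. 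Hence the condition becomes
$$
(c\sigma + \rho_\sigma)\wedge e^7 = -c\,\sigma\wedge e^7,
$$
equivalently $\rho_\sigma\wedge e^7 = -2c\,\sigma\wedge e^7$, which is exactly $d\sigma = \rho_\sigma\wedge e^7 = -2c\,\sigma\wedge e^7$. This proves both implications of the stated equivalence at the level of the $d\varphi$ equation: if $d\sigma = -2c\sigma\wedge e^7$ then $d\varphi = 3\tau_1\wedge\varphi$ with $\tau_1 = \frac13 c\,e^7$, and conversely if $\varphi$ is locally conformal calibrated with this $\tau_1$ then the displayed computation forces $d\sigma = -2c\sigma\wedge e^7$.

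The one genuine subtlety — and the step I expect to require the most care — is verifying that under the hypothesis $d\sigma = -2c\,\sigma\wedge e^7$ one indeed has a \emph{locally conformal calibrated} structure, i.e. that $\tau_0 \equiv 0$ and $\tau_3 \equiv 0$, and not merely that $d\varphi$ has the right shape. From Bryant's formula $d\varphi = \tau_0 *\varphi + 3\tau_1\wedge\varphi + *\tau_3$; since we have exhibited $d\varphi = 3\tau_1\wedge\varphi$ exactly, uniqueness of the torsion decomposition gives $\tau_0*\varphi + *\tau_3 = 0$. Because $*\varphi \in \Lambda^4_1$, $*\tau_3 \in \Lambda^4_{27}$ and these summands are independent, this forces $\tau_0 = 0$ and $\tau_3 = 0$; alternatively one pairs with $\varphi$ and with $*\varphi$ to isolate each. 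The remaining torsion datum $\tau_2$ is then whatever $d*\varphi - 4\tau_1\wedge*\varphi$ dictates, with no constraint needed. I would also remark for completeness that $\tau_1 = \frac13 c\, e^7$ is automatically closed here since $e^7$ is closed, consistent with the general fact noted in the Preliminaries. This completes the proof; the essential content is just the two-line exterior-derivative bookkeeping together with the uniqueness of Bryant's torsion forms.
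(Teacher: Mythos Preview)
Your argument is correct and follows essentially the same route as the paper: compute $d\varphi$ using the splitting $d\theta=\hat d\theta+\rho\wedge e^7$, use $\hat d\omega=c\sigma$ and $\hat d\sigma=0$, and compare with $d\varphi=ce^7\wedge\varphi$ to extract $\rho_\sigma=-2c\sigma$. The only difference is that you make explicit, via uniqueness of Bryant's torsion decomposition, why $d\varphi=3\tau_1\wedge\varphi$ forces $\tau_0=\tau_3=0$, a point the paper leaves implicit.
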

\proof
Suppose that $d\sigma = -2c\sigma\W e^7$, we can write $d\omega = \hat{d}\omega + \gamma\W e^7$ for some 2-form $\gamma \in \Lambda^2 (\frak{n}^*)$.  
We  obtain  $d \varphi = ce^7\W\f.$ Then, $\f$ is locally conformal calibrated with $\tu = \frac13ce^7$.

Conversely,  suppose that $\f$ is locally conformal calibrated with $\tu=\frac13ce^7$. Then we have $d\f = ce^7 \W \f$. 
Moreover, we know that $d\sigma = \hat{d}\sigma  + \beta\W e^7 = \beta\W e^7$ for some 3-form $\beta \in \Lambda^3 (\frak{n}^*)$, since $\sigma$ is $\hat{d}$-closed. We then have
$$
d\f  =  d\omega\W e^7 + d\sigma = e^7\W (-c\sigma -\beta)
$$
and comparing this with the previous expression of $d\f$ we obtain
$$
\begin{array}{lclcl}
e^7\W (-c\sigma -\beta) & = & c e^7 \W \f & = & e^7 \W (c\sigma)
\end{array}
$$
from which follows $\beta = -2c\sigma$.\endproof

Now we will construct an Einstein locally conformal  calibrated $G_2$-structure on a rank-one extension of the Lie algebra $\frak n_{28}$ 
(Lie algebra of the $3$-dimensional complex Heisenberg group) endowed  with the coupled $\SU(3)$-structure \eqref{copledn28}.

\begin{ex} \label{eqn:ex2} 
Consider $\frak{n}_{28}$ and the metric rank-one solvable extension $\frak s = \frak{n}_{28} \oplus\RR e_7$  with structure equations
$$
\left(\frac12e^{17},\frac12e^{27},\frac12e^{37},\frac12e^{47}, e^{13} - e^{24}+e^{57},  e^{14} + e^{23}+e^{67},0\right).
$$

The associated solvable Lie group $S$ is  not unimodular and so it does not admit any compact quotient \cite{Mi}.
Consider on $\frak{n}_{28}$ the coupled $\SU(3)$-structure $(\omega, \sigma)$ given by \eqref{copledn28}
with the nilsoliton associated inner product
$$
h = (e^1)^2 + \ldots + (e^6)^2.
$$
Then the inner product on $\frak{s}$
$$
g = (e^1)^2 + \ldots + (e^7)^2
$$
is Einstein with  Ricci tensor  ${\mbox Ric} (g) = - 3 g$.

Since $d\sigma = 2\sigma\W e^7$, by the previous proposition we have a locally conformal calibrated $G_2$-structure on $\frak{s}$ given by
$$
\varphi = \omega\W e^7 +\sigma = e^{127} + e^{347} - e^{567} + e^{136} - e^{145} - e^{235} - e^{246}
$$
and it is easy to show that $g_{\varphi} = g$.  
Then the corresponding solvmanifold $(S, \varphi)$ is an example of  non-compact homogeneous manifold  endowed with an Einstein (non-flat) locally conformal calibrated 
$G_2$-structure.

Observe that the $G_2$-structure $\varphi$ satisfies the conditions 
$$
\begin{array} {l}
d \varphi = - e^7 \wedge \varphi,\\
d * \varphi = -  e^7 \wedge (3 e^{1256} + 2 e^{1234} + 3 e^{3456}).
\end{array}
$$
Then 
$$
\tau_1 = - \frac{1} {3} e^7,
$$ 
as we expected from Proposition \ref{eqn:propext}, and 
$$
\tau_2 =  -\left( \frac{5}{3}  e^{12} + \frac{5} {3} e^{34} + \frac{10} {3} e^{56} \right).
$$
Moreover, the $G_2$-structure  is not $*$-Einstein, since by direct computation with respect to the orthonormal  basis $\left(e_1, \ldots, e_7\right)$ one has
$$
\rho^* = \left( \begin{array} {ccccccc}   1&0&0&0&0&0&0\\  0& 1&0&0&0&0&0\\  0&0& 1&0&0&0&0\\ 0&0&0&1&0&0&0\\ 
0&0&0&0&22&0&0\\ 0&0&0&0&0&22&0\\ 0&0&0&0&0&0&-6
\end{array} \right).
$$
\end{ex}
 
It is worth emphasizing here that, by \cite{FFM}, on seven-dimensional solvmanifolds there are no left-invariant calibrated $G_2$-structures inducing 
an Einstein non-flat metric.The previous example shows that the situation is different in the case of locally conformal calibrated $G_2$-structures.
 
We provide now a non-compact  example of  homogeneous manifold admitting an  Einstein (non-flat)  locally conformal parallel $G_2$-structure.

\begin{ex} The Einstein rank-one solvable extension of the six-dimensional abelian Lie algebra
is the  solvable Lie algebra with structure equations
$$(ae^{17},ae^{27},ae^{37},ae^{47},ae^{57},ae^{67},0),$$
where $a$ is a nonzero real number. 
The Riemannian metric
$$
g = (e^1)^2 + \ldots + (e^7)^2
$$
is Einstein  with  Ricci tensor given by ${\mbox Ric} (g) = - 6  a^2 g$.

The 3-form
$$
\varphi = -e^{125} - e^{136}- e^{147} + e^{237}  - e^{246} + e^{345} -e^{567}
$$
has stabilizer $G_2$, is such that $g_\f = g$ and satisfies the conditions 
$$
\begin{array} {l}
d \varphi = -3 a e^{2467} + 3 a e^{3457} - 3 a e^{1257} - 3 a e^{1367},\\
d * \varphi = 4 a e^{23567} + 4 a e^{12347} - 4 a e^{14567}.
\end{array}
$$
It is immediate to show that $\tu = -ae^7 $ and $\tz\equiv0, \td\equiv0, \ttr\equiv0$, that is, the $G_2$-structure $\f$ is locally conformal parallel.
\end{ex}

\end{document}